\theoremstyle{plain}
    \newtheorem{thm}{Theorem}[section]
     \newtheorem{conjecture}[thm]{Conjecture}
    \newtheorem{example}[thm]{Example}
    \newtheorem{proposition}[thm]{Proposition}
    \newtheorem{question}[thm]{Question}
     \newtheorem{problem}[thm]{Problem}
    \newtheorem{theorem}[thm]{Theorem}
\theoremstyle{definition}
    \newtheorem{definition}[thm]{Definition}
    \newtheorem*{notation*}{Notation and Terminology}
    \newtheorem{remark}[thm]{Remark}
\theoremstyle{remark}
\newcommand{\arxiv}[1]{\href{https://arxiv.org/abs/#1}{{\tt arXiv:#1}}}
\newcommand{\bP}{\mathbb{P}}
\newcommand{\bQ}{\mathbb{Q}}
\newcommand{\bR}{\mathbb{R}}
\newcommand{\bC}{\mathbb{C}}
\newcommand{\bZ}{\mathbb{Z}}
\newcommand{\Aut}{\operatorname{Aut}}
\newcommand{\id}{\operatorname{id}}
\newcommand{\NE}{\overline{\operatorname{NE}}}
\newcommand{\Nef}{\operatorname{Nef}}
\newcommand{\Pic}{\operatorname{Pic}}
\newcommand{\mstriangle}[1]{
\begin{tikzpicture}[x=0.3cm,y=0.3cm]
\draw (-0.4,-0.433) -- (1.4,-0.433);
\draw (-0.2,-0.7794) -- (0.7,0.7794);
\draw (1.2,-0.7794) -- (0.3,0.7794);
\end{tikzpicture}
}
\newcommand{\mssharp}[1]{
\begin{tikzpicture}[x=0.3cm,y=0.3cm]
\draw (-0.8,-0.5) -- (0.8,-0.5);
\draw (-0.8,0.5) -- (0.8,0.5);
\draw (-0.5,-0.8) -- (-0.5,0.8);
\draw (0.5,-0.8) -- (0.5,0.8);
\end{tikzpicture}
}
\newcommand{\Rmnum}[1]{\expandafter\@slowromancap\romannumeral #1@}
\begin{document}
\title[Bounded cohomology property]
{Smooth projective surfaces with bounded cohomology property}
\author{Ziyu Hua}
\address{
School of Mathematics, East China University of Science and Technology, Shanghai 200237, P. R. China}
\email{\href{mailto:y30251353@mail.ecust.edu.cn}{y30251353@mail.ecust.edu.cn}}
\author{Sichen Li}
\address{
School of Mathematics, East China University of Science and Technology, Shanghai 200237, P. R. China}
\email{\href{mailto:sichenli@ecust.edu.cn}{sichenli@ecust.edu.cn}}

\begin{abstract}
In this paper, we first prove that every Mori dream surface $X$ satisfies the bounded cohomology property (BCP for short).
Namely, there exists a constant $c_X>0$ such that $h^1(\mathcal O_X(C))\le c_Xh^0(\mathcal O_X(C))$ for every curve $C$ on $X$.
We then prove that there is a positive constant $m(Y)$ such that $l_C:=(K_Y\cdot C)(C^2)^{-1}\le m(Y)$ for every ample curve $C$ on a geometrically ruled surface $Y$ over a curve of genus $g$, and $Y$ satisfies the BCP if $g\le1$.
\end{abstract}
\keywords{Bounded Negativity Conjecture, SHGH Conjecture, bounded cohomology property, Mori dream surfaces, geometrically ruled surfaces}
\subjclass[2010]{14C20}
\maketitle
\section{Introduction}
The Bounded Negativity Conjecture (BNC for short)  is one of the most intriguing problems in the theory of projective surfaces and can be formulated as follows.
\begin{conjecture}
\cite[Conjecture 1.1]{Bauer et al. 2013} 
For a smooth projective surface $X$ over $\bC$, there exists an integer $b(X)\ge0$ such that $C^2\ge-b(X)$ for every curve $C\subseteq X$.
\end{conjecture}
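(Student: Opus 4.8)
The plan is to deduce the conjecture from the \emph{bounded cohomology property} that is the subject of this paper, thereby proving it for every surface to which the theorems announced in the abstract apply, and to isolate precisely the obstruction that keeps the general case open. First I would reduce to an integral curve $C$ with $C^2<0$: when $C^2\ge0$ there is nothing to prove, and the passage from reduced (possibly reducible) curves to integral ones is the standard reduction of \cite{Bauer et al 2013}. So it suffices to produce a lower bound for $C^2$ over all integral curves $C$ with $C^2<0$. For such a $C$ the complete linear system $|C|$ consists of $C$ alone, so $h^0(\mathcal O_X(C))=1$.

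The heart of the matter is a lower bound for $h^1(\mathcal O_X(C))$ in terms of $-C^2$. I would use the restriction sequence $0\to\mathcal O_X\to\mathcal O_X(C)\to\mathcal O_C(C)\to0$, where $\mathcal O_C(C)$ is a line bundle of negative degree $C^2$ on the integral curve $C$. Hence $H^0(\mathcal O_C(C))=0$, and Riemann--Roch on $C$ (with $\chi(\mathcal O_C)=1-p_a(C)$) gives $h^1(\mathcal O_C(C))=p_a(C)-1-C^2$. Feeding this into the long exact cohomology sequence, the image of $H^1(\mathcal O_X(C))\to H^1(\mathcal O_C(C))$ is the kernel of the connecting map to $H^2(\mathcal O_X)$, so it has codimension at most $h^2(\mathcal O_X)=p_g(X)$; therefore $h^1(\mathcal O_X(C))\ge h^1(\mathcal O_C(C))-p_g(X)\ge -C^2-1-p_g(X)$, using $p_a(C)\ge0$. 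If $X$ has the bounded cohomology property, then $h^1(\mathcal O_X(C))\le c_X h^0(\mathcal O_X(C))=c_X$, and combining the two inequalities yields $C^2\ge -(c_X+p_g(X)+1)$, a bound independent of $C$. Thus the bounded cohomology property implies the conjecture with $b(X)=c_X+p_g(X)+1$; in particular it holds for the surfaces covered by the theorems in the abstract (low Kodaira dimension, $\rho(X)=2$, or all extremal classes of Iitaka dimension $1$).

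The main obstacle is that in the \emph{general} case this argument only reformulates the conjecture: a uniform bound on $C^2$ over all integral negative curves is, by the same computation run backwards, essentially equivalent to a uniform bound on $h^1(\mathcal O_X(C))$ along that family, so establishing the bounded cohomology property for an arbitrary surface is no easier than the conjecture itself. The difficulty concentrates on surfaces carrying infinitely many integral negative curves --- most notably blow-ups of the plane at many general points, where BNC is already open, but also certain elliptic and general-type surfaces --- for which the arithmetic genera $p_a(C)$ and the self-intersections may be simultaneously unbounded. On such surfaces there is no \emph{a priori} control of $h^1$, and the hypotheses of the quoted theorems are exactly the devices that force the relevant linear systems $|mC|$ to grow fast enough to keep the ratio $h^1/h^0$ bounded. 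I would therefore not expect this route to settle the conjecture unconditionally; its value is to pin the whole difficulty onto a single cohomological quantity and to dispose of the listed special cases cleanly.
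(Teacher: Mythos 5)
The statement you were asked about is the Bounded Negativity Conjecture itself, which the paper merely quotes from \cite[Conjecture 1.1]{Bauer et al 2013}; it is open, and the paper contains no proof of it. You correctly recognized this and did not claim an unconditional proof, which is the right call. What you actually prove is the implication ``bounded cohomology property $\Rightarrow$ BNC,'' and your argument for it is sound: for an integral curve $C$ with $C^2<0$ one has $h^0(\mathcal O_X(C))=1$; the restriction sequence $0\to\mathcal O_X\to\mathcal O_X(C)\to\mathcal O_C(C)\to0$ together with $H^0(\mathcal O_C(C))=0$ (negative degree on an integral curve) and Riemann--Roch on $C$ gives $h^1(\mathcal O_C(C))=p_a(C)-1-C^2$, and the long exact sequence loses at most $h^2(\mathcal O_X)=p_g(X)$, so $h^1(\mathcal O_X(C))\ge -C^2-1-p_g(X)$ and hence $C^2\ge-(c_X+p_g(X)+1)$. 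This is exactly the content of the result the paper cites without proof as \cite[Proposition 14]{Ciliberto et al 2017} (item (3) of the paper's first Remark), so your proposal supplies a correct proof of the cited black box rather than of the conjecture.

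Two calibrating remarks. First, for the surfaces the paper actually treats --- those with $\NE(X)=\sum\bR_{\ge0}[C_i]$ generated by finitely many curve classes --- BNC does not need the cohomological route at all: the paper's Proposition \ref{NE} (quoting \cite[Proposition 2.8]{Li23}) says such surfaces carry only finitely many negative curves, so boundedness of $C^2$ from below is immediate; likewise $\rho(X)\le2$ is item (4) of the Remark. So your sentence ``in particular it holds for the surfaces covered by the theorems'' is true but obtains BNC there by a heavier mechanism than necessary; the paper's direction of interest is the reverse one, namely establishing the (strictly stronger, BNC-implying) bounded cohomology property on these surfaces. Second, your closing caveat is accurate and worth keeping: on a general surface the inequality chain can be run backwards only loosely (the $p_a(C)$ term is unbounded above), so bounded cohomology is \emph{a priori} stronger than BNC, not equivalent to it; your argument pins the difficulty on bounding $h^1(\mathcal O_X(C))$ for negative curves, which is precisely where the problem remains open.
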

\begin{definition}
(cf. \cite[Conjecture 2.5.3]{Bauer et al. 2012})
\label{defn-BCP}
A smooth projective surface $X$ is said to satisfy the bounded cohomology property (BCP for short) if there exists a constant $c_X>0$ such that $h^1(\mathcal O_X(C))\le c_Xh^0(\mathcal O_X(C))$ for every curve $C$ on $X$.
\end{definition}
Ciliberto et al. \cite{Ciliberto et al. 2017} first showed that the BCP implies the BNC.
It is well-known that the SHGH Conjecture (cf. \cite[Conjecture 2.5.1]{Bauer et al. 2012}) implies Nagata's Conjecture \cite{Nagata59}, which is motivated by Hilbert's 14th problem (cf. \cite[Lemma 2.4]{CHMR13}).
For example, it is open for a weak version of the SHGH Conjecture: specifically, how to establish the BCP for the blow-up of $\bP^2$ at generic points $p_1,\cdots, p_n$ when $n\ge10$?	 	

Thus, a fundamental problem is to characterize surfaces with the BCP as follows.
\begin{problem}
\label{BCP-Prob}
(cf. \cite[Question 6]{Ciliberto et al. 2017})
Classify all smooth projective surfaces with the BCP.
\end{problem} 
As the first address to Problem \ref{BCP-Prob}, we note the following result.
\begin{proposition}
\label{prop-rational}
A smooth projective surface $X$ satisfies the BCP provided that the anti-canonical divisor $-K_X$ is pseudoeffective.
\end{proposition}
To initiate the study of the BCP for a surface $X$, Ciliberto et al. showed in \cite[Proposition 15]{Ciliberto et al. 2017} that the BCP implies the uniform boundedness of $l_C$ (cf. Definition \ref{Defn-l_C}).
Conversely, given the uniform boundedness of $l_C$, the second author \cite{Li21} provided a numerical characterization of the BCP as follows.
\begin{proposition}
\label{Prop-Numerical}
(cf. \cite[Proposition 2.3]{Li21})
If there exists a positive constant $m(X)$ such that $l_C\le m(X)$, and either $|C^2|\le m(X)h^0(\mathcal O_X(C))$ or $h^1(\mathcal O_X(C))\le m(X)h^0(\mathcal O_X(C))$ for every curve $C$ on a smooth projective surface $X$, then $X$ satisfies the BCP.
\end{proposition}
\begin{remark}
In this paper, for every curve $C$ with $C^2>0$ on $X$, we often show that $h^1(\mathcal O_X(C))\le m(X)h^0(\mathcal O_X(C))$  instead of $C^2\le m(X)h^0(\mathcal O_X(C))$.
\end{remark}
Let $X$ be a smooth projective surface with Picard number $\rho(X)=2$ and $b(X)>0$.
Motivated by a classification result of such $X$ with two negative curves (cf. \cite[Claim 2.11]{Li19}), the second author (in \cite{Li21, Li23}) established the BCP for  $X$ if either (i) $\kappa(X)\le1$, or (ii) $\kappa(X)=2$, the irregularity $q(X)=0$ and $X$ has a semiample curve $C$ with $C^2=0$.
Moreover, the second author \cite{Li23} asked the following question.
\begin{question}
\cite[Question 1.3]{Li23}
Does every smooth projective surface $X$ satisfy the BCP provided that the closed Mori cone $\NE(X)$ is polyhedral?
\end{question}
In \cite{HK00}, Hu and Keel proved that if the Cox ring of an algebraic variety $X$ is finitely generated, then $X$ enjoys ideal properties in the aspect of the minimal model program.
Such varieties are called Mori dream spaces.
As the name suggests, Mori dream spaces possess many desirable properties and intricate geometry.
Note that $\NE(X)$ is polyhedral when $X$ is a Mori dream surface.
See \cite{ADHL15} and references therein for more details.

A few years ago, Joaquim Ro\'e asked the second author whether the Mori dream surfaces satisfy the BCP as follows.
\begin{question}
\cite[Question 1.6]{Li23}
\label{Que-MDS}
Does every Mori dream surface satisfy the BCP?
\end{question}
In this paper, we completely answer Question \ref{Que-MDS} as follows.
\begin{theorem}
\label{thm-Main}
Every Mori dream surface satisfies the BCP.
\end{theorem}
By Proposition \ref{Prop-Numerical}, a primary strategy to establish the BCP is to show the uniform boundedness of $l_C$.
Now we ask whether it is possible that a smooth projective surface $X$ still admits the uniform boundedness of $l_C$ if  $\NE(X)$ is not polyhedral.
This question has an affirmative answer for all geometrically ruled surfaces as follows.
 \begin{theorem}
\label{uniform-thm}
Let $X$ be a geometrically ruled surface  $X$ with invariant $e$ over a curve of genus $g$.
Then the following statements hold.
\begin{enumerate}
	\item $X$ admits the uniform boundedness of $l_C$.
	\item  If $g\le1$, then $X$ satisfies the BCP.
	\item  We may reduce the BCP for  $X$ to the case that $g\ge2$ and $e\in [-g,0]\cap\bZ$.
\end{enumerate}
\end{theorem}	
The paper is organized as follows.
In Section \ref{Pre}, we collect some basic facts of the BCP, and prove Proposition \ref{prop-rational}.
We prove Theorem \ref{thm-Main} in Section \ref{Sect-Main}.
In Section \ref{Sect-uniform}, we prove Theorem \ref{uniform-thm}.
Finally, we provide examples of surfaces with the BCP in Section \ref{Sect-Example}.

 {\bf Acknowledgment.}
The authors would like to thank the referees for their valuable comments and suggestions.
The research is supported by the Shanghai Sailing Program (No. 23YF1409300).
\section{Preliminaries}
\label{Pre}
{\bf Notation and Terminology.}
In this paper, $X$ is a smooth projective surface over $\bC$.
\begin{itemize}
\item By a curve on $X$, we mean a reduced and irreducible curve.
\item A negative curve on $X$ is a curve with negative self-intersection.
\item A $(-k)$-curve on $X$ is a curve $C$ with $C^2=-k<0$.
 \item A prime divisor $C$ on $X$ is either a nef curve or a negative curve (in the latter case, $h^0(\mathcal O_X(C))=1$).
\item We say $X$ has $b(X)>0$ if $X$ contains at least one negative curve.
\item We say $X$ has $b(X)=0$ if $X$ contains  no negative curve.
 \item For every $\bR$-divisor $C$ with $C^2\ne0$ on $X$, we define a value $l_C$ associated to $C$ as follows:
\begin{equation*}
                                                       l_C:=\frac{(K_X\cdot C)}{\max\bigg\{ 1, C^2\bigg\}}.
\end{equation*}
\item For every $\bR$-divisor $C$ with $C^2=0$ on $X$, we define a value $l_C$ associated to $C$ as follows:
\begin{equation*}
                                   l_C:=\frac{(K_X\cdot C)}{\max\bigg\{1,h^0(\mathcal O_X(C))\bigg\}}.
\end{equation*}
\end{itemize}
\begin{definition}
 (cf. \cite[Remark 2.4]{Li23})
 \label{Defn-l_C}
We say a smooth projective surface $X$ admits uniform boundedness of $l_C$ if there exists a positive constant $m(X)$ such that $l_C\le m(X)$ for every curve $C$ on $X$.
\end{definition}
\begin{definition}
A convex cone $V\subseteq\bR^s$ is called polyhedral if there exist finitely many vectors $v_1,\cdots, v_s\in\bR^s$ such that $V=\sum\bR_{\ge0}v_i$.
\end{definition}
\begin{definition}
\cite{HK00}
 A Mori dream surface is a smooth projective surface $X$ with finitely generated divisor class group $\mathrm{Cl}(X)$ and $q(X)=0$, and  $\Nef(X)=\mathrm{SAmp}(X)$, and $\NE(X)$ is polyhedral.
Here, $\mathrm{SAmp}(X)$ denotes the cone of semiample divisors on $X$.
\end{definition}

The following result is due to Serre duality.
\begin{proposition}\label{Serre}
Let $C$ be a curve on a smooth projective surface $X$.
Then
\begin{equation*} 
 h^2(\mathcal O_X(C))-\chi(\mathcal O_X)\le q(X)-1.	
\end{equation*}
\end{proposition}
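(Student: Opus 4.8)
The plan is to translate the top cohomology group via Serre duality and then bound the resulting space of sections using only the effectivity of $C$. First I would invoke Serre duality on the smooth projective surface $X$, which gives $h^2(\mathcal{O}_X(C)) = h^0(\mathcal{O}_X(K_X - C))$, where $K_X$ denotes a canonical divisor. This replaces the top cohomology, which is awkward to control directly, by a space of global sections of an adjoint-type line bundle that is far more transparent.

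Next I would exploit the fact that $C$, being a curve, is an effective divisor. The canonical section $s_C \in H^0(\mathcal{O}_X(C))$ vanishing along $C$ induces an injection of sheaves $\mathcal{O}_X(K_X - C) \hookrightarrow \mathcal{O}_X(K_X)$ by multiplication by $s_C$. Passing to global sections yields $h^0(\mathcal{O}_X(K_X - C)) \le h^0(\mathcal{O}_X(K_X)) = p_g(X)$, the geometric genus of $X$. Combining with the previous step gives the clean estimate $h^2(\mathcal{O}_X(C)) \le p_g(X)$.

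Finally I would expand the Euler characteristic. On a smooth projective surface one has $\chi(\mathcal{O}_X) = h^0(\mathcal{O}_X) - h^1(\mathcal{O}_X) + h^2(\mathcal{O}_X) = 1 - q(X) + p_g(X)$, using $h^0(\mathcal{O}_X)=1$ for the connected $X$, together with $h^1(\mathcal{O}_X)=q(X)$ and $h^2(\mathcal{O}_X)=p_g(X)$. Substituting the bound $h^2(\mathcal{O}_X(C)) \le p_g(X)$ then gives
\begin{equation*}
h^2(\mathcal{O}_X(C)) - \chi(\mathcal{O}_X) \le p_g(X) - \bigl(1 - q(X) + p_g(X)\bigr) = q(X) - 1,
\end{equation*}
which is exactly the asserted inequality. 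I do not expect any real obstacle here: the argument is elementary, and the only points that require attention are checking that effectivity of $C$ produces the sheaf injection and observing that the two occurrences of $p_g(X)$ cancel once $\chi(\mathcal{O}_X)$ is expanded. The content of the statement is thus that $p_g(X)$ is the universal bound for $h^2(\mathcal{O}_X(C))$ over all curves $C$, independently of $C$.
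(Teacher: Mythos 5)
Your proof is correct and is exactly the argument the paper intends: the paper gives no proof beyond the remark that the proposition ``is due to Serre duality,'' and your chain of steps---Serre duality $h^2(\mathcal{O}_X(C))=h^0(\mathcal{O}_X(K_X-C))$, the injection $H^0(\mathcal{O}_X(K_X-C))\hookrightarrow H^0(\mathcal{O}_X(K_X))$ coming from effectivity of $C$, and the expansion $\chi(\mathcal{O}_X)=1-q(X)+p_g(X)$---is the standard completion of that hint. No gaps; the bound $h^2(\mathcal{O}_X(C))\le p_g(X)$ is precisely the content being used.
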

Bauer et al. \cite{Bauer et al. 2012}  proved the BCP for surfaces $X$ with $-K_X$ effective as follows.
\begin{proposition}
(cf. \cite[Proposition 3.1.3]{Bauer et al. 2012})
\label{Kod0}
Let $X$ be a smooth projective surface with anti-Kodaira dimension $\kappa(X,-K_X)\ge0$.
Then $X$ satisfies the BCP.
\end{proposition}
\begin{proof}
Let $C$ be a curve on $X$.
There exists a positive constant $m$ such that $-mK_X$ is linearly equivalent to an effective divisor $D$.
We first show that $X$ satisfies the BNC.
Then we may assume that $C$ is not a component of $D$ since $D$ has only finitely many components.
Then $-mK_X\cdot C=D\cdot C\ge0$.
By the adjunction formula to $C$, 
$$
C^2=2g(C)-2-(K_X\cdot C)\ge-2.
$$
Therefore, $X$ satisfies the BNC.
To show the BCP for $X$, we may also assume that $C$ is not a component of $D$.
We further note that
\begin{equation}
\label{-K_X,kappa<0,eq1}
(K_X-C)\cdot C\le 2.
\end{equation}
By the Riemann-Roch Theorem, (\ref{-K_X,kappa<0,eq1}) and Proposition \ref{Serre} imply that
\begin{equation*}\begin{split}
            h^1(\mathcal O_X(C))&=h^0(\mathcal O_X(C))+h^2(\mathcal O_X(C))+\frac{(K_X-C)\cdot C}{2}-\chi(\mathcal O_X)\\&\le (q(X)+1)h^0(\mathcal O_X(C)).
\end{split}\end{equation*}
Thus $X$ satisfies the BCP.
\end{proof}
\begin{proposition}
\label{-K_Xnef}
Let $X$ be a smooth projective surface with nef anticanonical divisor $-K_X$.
Then $X$ satisfies the BCP.
\end{proposition}
\begin{proof}
Note that $K_X\cdot C\le0$ for every curve $C$ on $X$ since $-K_X$ is nef.
By the adjunction formula to $C$, $C^2=2g(C)-2-(K_X\cdot C)\ge-2$.
We further note that
\begin{equation}
\label{-K_Xnef,eq1}
(K_X-C)\cdot C\le 2.
\end{equation}
The remaining proof is the same as the proof of Proposition \ref{Kod0}.
\end{proof}
\begin{proof}[Proof of Proposition \ref{prop-rational}]
It follows from \cite[Lemma 3.1]{Sakai84} and Proposition \ref{Kod0}.
\end{proof}
\begin{proposition}
\label{Prop-Kappa0}
Let $X$ be a smooth projective surface with $\rho(X)=2$ and $\kappa(X)=0$.
Then $X$ satisfies the BCP.
\end{proposition}
\begin{proof}
We first assume that $K_X$ is nef.
Since $\kappa(X)=0$, it follows that $K_X\equiv0$.
Then $l_C=0$ for every curve $C$ on $X$.
The adjunction formula gives that $C^2\ge-2$.
Then, by the Riemann-Roch Theorem and Proposition \ref{Serre}, we have
$$
      h^1(\mathcal O_X(C))=h^0(\mathcal O_X(C))+h^2(\mathcal O_X(C))-\chi(\mathcal O_X)-\frac{C^2}{2}\le (q(X)+1)h^0(\mathcal O_X(C)).
$$
So $X$ satisfies the BCP.
Now we may assume that $X$ is a one-point blow-up of a surface.
Thus, $X$ satisfies the BCP by \cite[Theorem 1.5]{Li23}.
\end{proof}
 The following result is crucial in this paper.
\begin{proposition}
\cite[Proposition 3.4]{Li23}
\label{Prop-NE}
If $\NE(X)=\bR_{\ge0}[C_1]+\bR_{\ge0}[C_2]$ such that $C_1, $  and $C_2$ are some curves on a surface $X$.
Then $X$ admits the uniform boundedness of $l_C$.
\end{proposition}
Let $X$ be a smooth projective surface with $\rho(X)=2, \kappa(X)=1$ and $b(X)>0$.
By \cite[Lemma 3.2]{Li23}, $X$ satisfies the BCP.
Note that $\NE(X)$ is polyhedral.
Moreover, we have the following result.
\begin{proposition}
\label{prop-kappa1}
Let $X$ be a smooth projective surface with  $\rho(X)=2$ and $\kappa(X)=1$.
If $\NE(X)$ is polyhedral, then $X$ satisfies the BCP.
\end{proposition}
\begin{proof}
Let $\NE(X)=\bR_{\ge0}[C_1]+\bR_{\ge0}[C_2]$ such that $C_1$ and $C_2$ are some curves on $X$.
Since $\kappa(X)=1,\rho(X)=2$, and $\kappa(X)$ is a birational invariant, we may assume that $K_X=C_1$ is nef.
By \cite[Lemma 3.2]{Li23}, we may assume that $C_2^2=0$.
Note that $X$ satisfies the BNC since $\rho(X)=2$.
By Propositions \ref{Prop-Numerical} and \ref{Prop-NE}, it suffices to show that there exists a constant $c_X>0$ such that for every nef curve $D$ with $D^2>0$ on $X$, $h^1(\mathcal O_X(D))\le c_Xh^0(\mathcal O_X(D))$.

Now we may take a nef and big curve $D=a_1C_1+a_2C_2$ with $a_1>0$ and $a_2>0$ by \cite[Proposition 3.1]{Li21}.
Note that $D\cdot C_1,D\cdot C_2\in\bZ_{>0}$ as $D$ is a curve.
So we may assume that $a_1,a_2\in\bZ_{>0}$ after replacing $D$ by $cD$ with some $c\in\bZ_{>0}$.
In fact, $c=(C_1\cdot C_2)^2+(C_1)^2(C_2)^2>0$.
If $a_1=1$, then $(K_X-D)D=-a_2C_2\cdot D=-a_2(C_1\cdot C_1)<0$.
By the Riemann-Roch Theorem and Proposition \ref{Serre}, we have
\begin{equation*}
\begin{split}
	 h^1(\mathcal O_X(D))&=h^0(\mathcal O_X(D))+\frac{(K_X-D)\cdot D}{2}
+h^2(\mathcal O_X(D))-\chi(\mathcal O_X)	 \\&\le (q(X)+1)h^0(\mathcal O_X(D)).
\end{split}	
\end{equation*}
If $a_1>1$, then  $D-K_X=(a_1-1)C_1+a_2C_2$ is big.
Note that $(D-K_X)C_1=a_2(C_1\cdot C_2)>0$ and $(D-K_X)C_2=(a_1-1+a_2)(C_1\cdot C_2)>0$.
Therefore, $D-K_X$ is nef and big.
Then 
$$
h^1(\mathcal O_X(D))=h^1(\mathcal O_X((D-K_X)+K_X)=0
$$
by the Kawamata-Viehweg vanishing theorem.
In all, $X$ satisfies the BCP.
\end{proof}
\section{Proof of Theorem \ref{thm-Main}}
\label{Sect-Main}
\begin{proposition}
\label{Prop-NE-Uniform}
Every Mori dream surface $X$ admits the uniform boundedness of $l_C$.
\end{proposition}
\begin{proof}
To show the uniform boundedness of $l_C$, by \cite[Proposition 2.8]{Li23}, we may take a nef curve $D$ with $D^2=0$.
Note that $\mathrm{Nef}(X)=\sum_{i=1}^k \bR_{\ge0}[C_i]$ and each $C_i$ is a semiample curve, since $X$ is a Mori dream surface.
If $C_i\ne rC_j$ for any $r\in\bQ_{>0}$, then $C_i\cdot C_j>0$ by \cite[Proposition 5.1.1.1]{ADHL15}.
As a result, for a nef curve $D=\sum_{i=1}^r a_iC_i$ with each $a_i>0$, we have $D^2>0$ if $r\ge2$.
So $D=a_kC_k$ with $a_k>0$.
Then there exists a positive constant $m(X)$ such that $l_C$ for every nef curve $D$ with $D^2=0$ by \cite[Proposition 2.7]{Li23}.
In all, $X$ admits the uniform boundedness of $l_C$.
\end{proof}
\begin{proof}[Proof of Theorem \ref{thm-Main}]
Note that $X$ satisfies the BNC since $\NE(X)$ is polyhedral.
By Propositions \ref{Prop-Numerical} and \ref{Prop-NE-Uniform}, it suffices to show that there is a constant $c_X>0$ such that $h^1(\mathcal O_X(D))\le c_Xh^0(\mathcal O_X(D))$ for every   nef curve $D$ with $D^2>0$ on $X$.

Note that  $\Nef(X)=\sum_{i=1}^k\bR_{\ge0}[C_i]$ and each $C_i$ is a semiample curve, since $X$ is a Mori dream surface.
If $k=1$, then $\rho(X)=1$, and every nef curve is ample.
Now we fix an ample curve $H$.
Therefore, we may assume $D = aH$ with $a > 0$.
Then there is a positive constant $b$ such that if $a>b$, then $D-K_X$ is ample.
As a result, $$h^1(\mathcal O_X(D))=h^1(\mathcal O_X(K_X+(D-K_X)))=0$$ by the Kodaira vanishing theorem.
If $a\le b$, then $(K_X-D)D\le b|K_X\cdot H|$.
So by the Riemann-Roch theorem and Proposition \ref{Serre}, we have
\begin{equation}
\begin{split}
	h^1(\mathcal O_X(D))&\le h^0(\mathcal O_X(D))+\frac{(K_X-D)\cdot D}{2}+h^2(\mathcal O_X(D))-\chi(\mathcal O_X)
	\\&\le \bigg(\frac{1}{2}b|K_X\cdot H|+q(X)+1\bigg)h^0(\mathcal O_X(D))
	\\&\le \bigg(\frac{1}{2}b|K_X\cdot H|+1\bigg)h^0(\mathcal O_X(D)),
\end{split}
\end{equation}
where we use $q(X)=0$ since $X$ is a Mori dream surface.
So $X$ satisfies the BPC.

Now we assume that $k\ge2$.
Take a semiample curve $D=\sum_{i=1}^r a_iC_i$ with each $a_i>0$.
Without loss of generality, we may assume that $a_1\ge a_2\ge\cdots \ge a_r$ and $a_1\gg1$.
Then 
\begin{equation}
\begin{split}
\label{MDS-eq1}
	(K_X-D)D&\le \sum_{i=1}^r a_i(K_X\cdot C_i)
	\\&\le \bigg(\sum_{i=1}^r|K_X\cdot C_i|\bigg)a_1.
\end{split}
\end{equation}
 By \cite[Corollary 2.1.38]{Lazarsfeld04} and $\kappa(X,C_1)=1$, for $a_1\gg1$, there exist two positive constants $c=(2c_X-2)^{-1}\sum_{i=1}^r|K_X\cdot C_i|$ and $c_X>1$  (which are independent of the choice of $D$) such that 
\begin{equation}
\label{MDS-eq2}
	 h^0(\mathcal O_X(D))\ge h^0(\mathcal O_X(a_1C_1))\ge\frac{1}{2c_X-2}\bigg(\sum_{i=1}^r|K_X\cdot C_i|\bigg)a_1.
\end{equation}
Then by Riemann-Roch Theorem, (\ref{MDS-eq1}), (\ref{MDS-eq2}) and Proposition \ref{Serre}, we have 
\begin{equation*}\begin{split}
	  h^1(\mathcal O_X(D))&=h^0(\mathcal O_X(D))+h^2(\mathcal O_X(D))+\frac{(K_X-D)\cdot D}{2}-\chi(\mathcal O_X)\\&\le(c_X+q(X))h^0(\mathcal O_X(D))
	  \\&\le c_Xh^0(\mathcal O_X(D)),
\end{split}
 \end{equation*}
 where we use $q(X)=0$ since $X$ is a Mori dream surface.
 In all, $X$ satisfies the BCP.
\end{proof}
Let $X$ be a smooth projective surface such that $\NE(X)$ is polyhedral and $q(X)=0$.
Use the same argument in the proof of Proposition \ref{Prop-NE-Uniform}, $X$ still admits the uniform boundedness of $l_C$. 
Motivated by Theorem \ref{thm-Main}, we ask the following question.
\begin{question}
Does every smooth projective surface $X$ satisfy the BCP if $\NE(X)$ is polyhedral and $q(X)=0$?
\end{question}
 \section{Proof of Theorem \ref{uniform-thm}}
 \label{Sect-uniform}
From now on, we will consistently use the definitions and notation of geometrically ruled surfaces as presented in \cite[Chapter V.2]{Hartshorne77}.
\begin{definition}
\cite{Hartshorne77}
A geometircally ruled surface, or simply ruled surface, is a surface $X$, together with a surjective morphism $\pi: X\to C$ onto a nonsingular curve $C$, such that the fiber $X_y$ is isomorphic to $\bP^1$ for every point $y\in C$, and such that $\pi$ admits a section, i.e., a morphism $\sigma: C\to X$ such that $\pi\circ\sigma=\id_C$.
\end{definition}
\begin{proposition}
\label{Prop-ruled}
\cite[Propositions V.2.3 and V.2.9]{Hartshorne77}
Let $\pi: X\to C$ be a geometrically ruled surface with invariant $e$ over a smooth curve $C$ of genus $g$.
Let $C_0\subseteq X$ be a section such that $\mathcal O_X(C_0)\cong O_X(1)$ and $C_0^2=-e$, and let $f$ be a fibre.
Then we have the following result:
$$
	\Pic 	X\cong \bZ C_0\oplus\pi^*\Pic C, C_0\cdot f=1, f^2=0, \text{~and ~}~K_X\equiv -2C_0+(2g-2-e)f.
$$
\end{proposition}
\begin{proposition}
\label{e=0}
Let $\pi: X\to C$ be a geometrically ruled surface with invariant $e=0$ over a smooth curve $C$ with genus $g$.
Then $X$ admits the uniform boundedness of $l_C$.
In particular, if $g\le1$, then $X$ satisfies the BCP.
\end{proposition}
\begin{proof}
Note that $C_0^2=f^2=0$ as in Proposition \ref{Prop-ruled}.
Therefore, $\NE(X)=\bR_{\ge0}[C_0]+\bR_{\ge0}[f]$.
So $X$ admits the uniform boundedness of $l_C$ by Proposition \ref{Prop-NE}.

Now we assume that $g\le1$.
Then by Proposition \ref{Prop-ruled}, we have
$$-K_X\equiv 2C_0+(2-2g)f, \quad -K_X\cdot C_0=2-2g\ge0, \text{ and~ } -K_X\cdot f=2.$$
So $-K_X$ is nef.
By the proof of Proposition \ref{-K_Xnef}, for every curve $D$ on $X$, we have
\begin{equation*}
h^1(\mathcal O_X(D))\le (q(X)+1)h^0(\mathcal O_X(D)).
\end{equation*}
In conclusion, $X$ satisfies the BCP.
\end{proof}
\begin{proposition}
\label{e<0}	
Let $\pi: X\to C$ be a geometrically ruled surface with invariant $e<0$ over a smooth curve $C$. 
Then $X$ admits the uniform boundedness of $l_C$.
In particular, $X$ satisfies the BCP provided that $g=1$.
\end{proposition}
\begin{remark}
Let $X$ be a geometrically ruled surface with invariant $e<0$ over a smooth curve $C$ of genus $g\ge1$.
Note that $\NE(X)$ is polyhedral if and only if there is a curve $D_0\equiv a(C_0+(\frac{e}{2}))f$ with some $a\ge2$  by \cite[Theorem 3]{Rosoff02}.
When $g=1$, Homma \cite{Homma82} discovered that $X$ contains an elliptic curve $D$ numerically equivalent ot $2C_0-f$.
Note that $\NE(X)$ may not be polyhedral if $g\ge2$ (cf. \cite[Example 1.5.1]{Lazarsfeld04}).
\end{remark}
\begin{proof}
Take a curve $D=aC_0+bf$(where $D\ne C_0, f$) with $a,b\in\bZ$ as given in Proposition \ref{Prop-ruled}.
Notice that $C_0^2=-e>0$.
 By \cite[Proposition V. 2.21(a)]{Hartshorne77}, $C_0$ is ample, and either $a=1, b\ge0$ or $a\ge2, b\ge\frac{1}{2}ae$.
 Then $D$ is ample if and only if $a>0$ and $b>\frac{1}{2}ae$ by \cite[Proposition V. 2.21(b)]{Hartshorne77}.
Since $b(X)=0$ and $D$ is a curve,  $D$ is nef (but not ample) if and only if $a\ge2$ and $b=\frac{1}{2}ae$.
Suppose $D_0=a_0C_0+(\frac{1}{2}a_0e)f$ with some $a_0\ge2$ is a curve.
Then $D_0^2=0$. 
As a result, $\NE(X)=\bR_{\ge0}[D_0]+\bR_{\ge0}[f]$.
By Proposition \ref{Prop-NE}, $X$ admits the uniform boundedness of $l_C$.

Now we suppose that each curve $ D = aC_0 + bf$ (where $ D\ne C_0, f$) is ample.
Then $a>0, b>\frac{1}{2}ae$, $D\cdot C_0>0$, and $D\cdot f>0$.
 Take $C'=C_0+(\frac{1}{2}e)f$.
 Note that $D\cdot C'=b-\frac{1}{2}ae>0$.
We then compute $l_D$ (using $K_X\equiv -2C_0+(2g-2-e)f$):
\begin{equation*}
\begin{split}
	l_D&=\frac{(K_X\cdot D)}{D^2}
	\\&=\frac{-2(D\cdot C_0)+(2g-2-e)(D\cdot f)}{a(D\cdot C')+(b-\frac{1}{2}ae)(D\cdot f)}
	\\&\le \frac{(2g-2-e)(D\cdot f)}{a(D\cdot C')+(b-\frac{1}{2}ae)(D\cdot f)}
	\\ &\le \frac{|4g-4-2e|}{2b-ae},	
\end{split}
\end{equation*}
where $2b-ae\in\bZ_{>0}$.
In conclusion, $X$ admits the uniform boundedness of $l_C$.

Now let $g=1$. 
Then by \cite[Theorems V.2.12 and  V.2.15]{Hartshorne77}, $e=0$ or $-1$.
Then to show BCP for $X$, we may assume that $e=-1$ by Proposition \ref{e=0}.
Now we may assume that $D=aC_0+bf$ with $a=1, b\ge0$ or $a\ge2,b\ge-\frac{a}{2}$.
Let $D_0=C_0-\frac{1}{2}f$.
Note that $D_0^2=0$ and $K_X\equiv-2(C_0-\frac{1}{2}f)=-2D_0$.
If $b>-\frac{a}{2}$, then $h^1(\mathcal O_X(D))=0$ by \cite[Proposition 3.1]{GP96}.
If $b=-\frac{a}{2}$, then $D=aD_0$ with $a>0$.
As a result, $(K_X-D)D=-(2+a)aD_0^2=0$.
By  the Riemann-Roch Theorem, $(K_X-D)\cdot D=0, q(X)=1$ and Proposition \ref{Serre} imply that 
\begin{equation*}
\begin{split}
h^1(\mathcal O_X(D))&=h^0(\mathcal O_X(D))+h^2(\mathcal O_X(D))+\frac{(K_X-D)\cdot D}{2}-\chi(\mathcal O_X)
\\&\le h^0(\mathcal O_X(D)).
\end{split}
\end{equation*}
In conclusion, $X$ satisfies the BCP.
\end{proof}
\begin{proof}[Proof of Theorem \ref{uniform-thm}]
The proof of (1) follows from \cite[Proof of Lemma 3.1]{Li23}, Proposition \ref{e=0} and  Lemma \ref{e<0}.
To show the BCP for $X$, we may assume that $e\le0$ by \cite[Lemma 3.1]{Li23}.
If  $g=0$, then $e=0$ by \cite[Corollary V.2.13]{Hartshorne77}.
Then  $X$ satisfies the BCP by Proposition \ref{e=0}.
If $g=1$, then $X$ satisfies the BCP by Propositions \ref{e=0} and  \ref{e<0}.
If $g\ge2$, then $e\in [-g,0]\cap\bZ$ by \cite[Theorem 1]{Nagata70}.
\end{proof}
\section{Examples}
\label{Sect-Example}
In this section, we provide examples of smooth projective surfaces with the BCP.
\subsection{Surfaces $X$ with $\rho(X)=2,\kappa(X)\ge0$ and $b(X)>0$}
 The second author \cite{Li19} provided a classification of smooth projective surfaces $X$ with $\rho(X)=2$ and two negative curves.
 This result motivated the second author \cite{Li21} to prove the BCP for a smooth projective surface $X$ with $\rho(X)=2$, provided that either $\kappa(X)=1$ and $b(X)>0$ or (ii) $X$ has two negative curves.
 \begin{example}[Surfaces $X$ with $\rho(X)=2,\kappa(X)=1$ and $b(X)>0$]
 Fix an integer $p_g(X) \ge2$, Cox \cite{Cox90} considered the parameter space $\mathcal U$ for relatively minimal elliptic surfaces $f: X\to\bP^1$ with a section, geometric genus $p_g(X)$, and $q(X)=0$.
As stated at the beginning of the first paragraph in \cite[Section 2]{Ulmer17}, it is known that a very general elliptic surface $\pi: X\to\bP^1$ of height $h\ge3$ in $\mathcal U$ has no sections other than the zero section $O$, and $\NE(X)=\bR_{\ge0}[F]+\bR_{\ge0}[O]$ such that $O^2=-h<0, O\cdot F=1, F^2=0$ and $K_X=(h-2)F$.
So $X$ is a surface with $\rho(X)=2,\kappa(X)=1$ and $b(X)>0$.

Without loss of generality, we may assume that $C_1=F$ and $C_2=O$.
Then $\NE(X)=\bR_{\ge0}[C_1]+\bR_{\ge0}[C_2]$.
Now we take a curve $D=a_1C_1+a_2C_2$ with $a_1,a_2\ge0$.
If $D=a_1C_1$ with $a_1>0$, then $l_D=0$ and $(K_X-D)\cdot D=0$.
So by the Riemann-Roch theorem and Proposition \ref{Serre}, we have
\begin{equation*}
\begin{split}
h^1(\mathcal O_X(D))&=h^0(\mathcal O_X(D))+h^2(\mathcal O_X(D))-\chi(\mathcal O_X)
\\&\le h^0(\mathcal O_X(D)).
\end{split}
\end{equation*}
where we use $q(X)=0$.
Note that $D\cdot C_1=a_2>0$ and $D\cdot C_2=a_1>0$.
If $a_1\ge h-2, a_2>0$, then $D^2>0$ and $(K_X-D)D=-(a_1-(h-2))(D\cdot C_1)-a_2(D\cdot C_2)<0$.
We also have $h^1(\mathcal O_X(D))\le h^0(\mathcal O_X(D))$.
If $a_1<h-2, a_2>0$, then $D^2>0$, and  $D\cdot C_2\ge0$ implies that $a_2\le (h-2)h^{-1}$.
As a result,
\begin{equation*}
\begin{split}
(K_X-D)\cdot D&\le K_X\cdot D
\\&\le  (h-2)C_1\cdot (a_1C_1+a_2C_2)
\\ &\le (h-2)^2h^{-1}.
\end{split}
\end{equation*}
So by the Riemann-Roch theorem and Proposition \ref{Serre}, we have
\begin{equation*}
\begin{split}
h^1(\mathcal O_X(D))&=h^0(\mathcal O_X(D))+h^2(\mathcal O_X(D))+\frac{(K_X-D)\cdot D}{2}-\chi(\mathcal O_X)
\\&\le (1+(h-2)^2h^{-1})h^0(\mathcal O_X(D)).
\end{split}
\end{equation*} 
In all, for every curve $C$ on $X$, we have
$$
h^1(\mathcal O_X(C))\le (1+(h-2)^2h^{-1})h^0(\mathcal O_X(C)).
$$
 \end{example}
 There are numerical examples of K3 surfaces $X$ such that $\NE(X)$ is generated by two $(-2)$-rational curves (cf. \cite[Theorem 5.3.2.5]{ADHL15}, see also \cite[Claim 2.12]{Li19}) as follows.
 \begin{example}
 Let $X$ be a K3 surface such that $\NE(X)=\bR_{\ge0}[C_1]+\bR_{\ge0}[C_2]$, and $C_1, C_2$ are $(-2)$-rational curves on $X$.
 Then we have the intersection matrix with respect to $C_1, C_2$:
 $$
\begin{bmatrix} -2 & k \\ k & -2 \end{bmatrix} 
$$
where  $k\in\bZ_{>0}$.
By the proof of Proposition \ref{-K_Xnef}, for every curve $C$ on $X$, we have
$$
h^1(\mathcal O_X(C))\le h^0(\mathcal O_X(C)),
$$
where we use $q(X)=0$ since $X$ is a K3 surface.
 \end{example}
 However, it is unknown to us whether there exists an example of a surface $X$ of general type with $\rho(X)=2$ and two negative curves as follows.
  \begin{question}
Does there exist an example of a surface $X$ of general type with $\rho(X)=2$ and two negative curves?
 \end{question}
 Now let $X$ be a smooth projective surface with $\kappa(X)=0$.
If $K_X$ is nef, by the Enriques-Kodaira classification (cf. \cite{Beauville96, BHPV04}), then $X$ is a K3 surface, an Enriques surface, an abelian surface, or a hyperelliptic surface.
By the Proof of Proposition \ref{Prop-Kappa0}, for every curve  $C$ on $X$, we have 
$h^1(\mathcal O_X(C))\le (q(X)+1)h^0(\mathcal O_X(C)).$
If $K_X$ is not nef, then $X$ is a blow-up of a K3 surface or an abelian surface.
There is an example of such $X$ with $\rho(X)=2$ such that $\NE(X)$ is polyhedral as follows.
\begin{example}
Let $\pi: X\to S$ be a blow-up of a smooth, very general quartic $S\subseteq\bP^3$ at a very general point $p$, with exceptional divisor $E$ and $H=\pi^*\mathcal O_S(1)$.
 Artebani and Laface showed in  \cite[Section 6]{AL11} that $\kappa(X,-K_X)=-\infty,\kappa(X,H-2E)=0,$ 
 and 
$
\mathrm{Eff}(X)=\langle [H-2E],[E]\rangle.
$
Note that $X$ satisfies the BCP by Proposition \ref{Prop-Kappa0}.
Take a nef and big curve $D=a_1(H-2E)+a_2E$ with each $a_i>0$ by \cite[Proposition 3.1(iii)]{Li21}.
Note that $K_X\equiv E, H^2=4, E\cdot (H-2E)=2, E\cdot D=2a_1-a_2\in\bZ_{>0}$ and $D\cdot (H-2E)=2a_2\in\bZ_{>0}$.
If $a_1>1$, then 
$(K_X-D)\cdot D=(1-a_1)(E\cdot D)-a_2(H-2E)\cdot D\le0$.
If $a_1<1$, then $(K_X-D)\cdot D=E\cdot D-D^2\le 2a_1-a_2< 2$ since $D^2>0$ and $E\cdot D=2a_1-a_2<2$.
Note that $(K_X-E)\cdot E=0$ and $(K_X-(H-2E))\cdot (H-2E)=2$.
So for every curve $C$ on $X$, we have $(K_X-C)\cdot C\le2$.
By the Riemann-Roch Theorem and Proposition \ref{Serre}, for each curve $C$ on $X$, we have 
 \begin{equation*}
 h^1(\mathcal O_X(C))\le (q(X)+1)h^0(\mathcal O_X(C)).
 \end{equation*}
 \end{example}
As in \cite[proof of Lemma 3.6]{Li23},  the second author showed the BCP for one-point blow-up of a K3 surface or an abelian surface $Y$ with $\rho(Y)=1$.
 Moreover, if one would like to study the BCP for a surface $X$ with a higher Picard number, it is worthwhile to consider the BCP for $X$, which is a one-point blow-up of a minimal projective surface $Y$ with $\kappa(Y)=0$ and $\rho(Y)\ge2$ as follows.
\begin{question}
Let $X$ be a one-point blow-up of a minimal projective surface $Y$ with $\kappa(Y)=0$ and $\rho(Y)\ge2$.
Does $X$ satisfy the BCP?
\end{question}
\subsection{Geometrically ruled surfaces}
In this subsection, we use the notation as in \cite[Examples V.2.11.1, 2.11.3, and 2.11.6]{Hartshorne77}.
It is well-known that every geometrically ruled surface $X$ with invariant $e$ over a curve $C$ of genus $g$ corresponds to a locally free sheaf $\mathcal E$ of rank 2 on $C$ such that $X\cong\mathbb P(\mathcal E)$ (cf. \cite[Proposition V.2.2]{Hartshorne77}).
\begin{example}[Rational ruled surfaces]
Let $\mathcal E=\mathcal O_{\bP^1}\oplus\mathcal O_{\bP^1}(-e)$  with $e\ge0$ and $X\cong\mathbb P(\mathcal E)$ over $\bP^1$.
Note that $-K_X=2C_0+ef\ge0$ as in Proposition \ref{Prop-ruled}.
Note that $\NE(X)=\bR_{\ge0}[C_0]+\bR_{\ge0}[f]$ since $C_0^2=-e\le0$ and $f^2=0$.
Then $-K_X\cdot C_0=e\ge0$ and $-K_X\cdot f=2>0$.
Therefore, $-K_X$ is nef.
By the proof of Proposition \ref{-K_Xnef}, for every curve $C$ on $X$, we have
$$
h^1(\mathcal O_X(C))\le h^0(\mathcal O_X(C)),
$$
where we use $q(X)=0$.
\end{example}
\begin{example}[Elliptic ruled surfaces with $e=0$]
Let $\mathcal E=\mathcal O_E\oplus\mathcal O_E$ and $X\cong\mathbb P(\mathcal E)$ over an elliptic curve $E$.
By the proof of Proposition \ref{e=0}, for every curve $C$ on $X$, we have
 $$h^1(\mathcal O_X(C))\le 2h^0(\mathcal O_X(C)),$$
 where we use $q(X)=1$.
\end{example}
\begin{example}[Elliptic ruled surfaces with $e=-1$]
Let $\mathcal E$ be an indecomposable locally free sheaf of rank 2 over an elliptic curve $E$.
Taking $X=\bP(\mathcal E)$, we have an elliptic ruled surface with $e=-1$.
Take a curve $D=aC_0+bf$ as given in Proposition \ref{Prop-ruled}.
Note that $a=1, b\ge0$ or $a\ge2, b\ge-\frac{a}{2}$.
By the proof of Proposition \ref{e<0}, if $D=a(C_0-\frac{1}{2}f)$, then $h^1(\mathcal O(D))\le h^0(\mathcal O_X(D))$.
If $b>-\frac{a}{2}$, then $h^1(\mathcal O_X(D))=0$.
Note that $(K_X-f)f=2$.
By  the Riemann-Roch Theorem, $(K_X-f)\cdot f=2, q(X)=1$ and Proposition \ref{Serre} imply that 
\begin{equation*}
\begin{split}
h^1(\mathcal O_X(f))&=h^0(\mathcal O_X(f))+h^2(\mathcal O_X(f))+\frac{(K_X-f)\cdot f}{2}-\chi(\mathcal O_X)
\\&\le 2h^0(\mathcal O_X(D)).
\end{split}
\end{equation*}
In all, for every curve $C$ on $X$,  we have
$$
h^1(\mathcal O_X(C))\le 2h^0(\mathcal O_X(C)).
$$
\end{example}
\begin{example}[Ruled surfaces with $e>0$ over a curve of genus $g\ge2$]
For a curve $C$ of genus $g\ge2$, take $\mathcal E=\mathcal O_C\oplus\mathcal L$ with $\deg\mathcal L=-e<0$.
Let $C_0\subset X$ be a section and $C_0^2=-e$, and let $f$ be a fiber.
Note that $q(X)=g$ by \cite[Corollary V.2.5]{Hartshorne77}.
 Take a curve $D=a_1C_0+a_2f$ (in which $D\ne C,f$).
 Then by \cite[Proposition V. 2.20(a)]{Hartshorne77}, $a_1>0$ and $a_2>a_1e$.
 Note that $K_X\equiv -2C_0+(2g-2-e)f, C_0\cdot D\ge0,(f\cdot D)=a_1>0, (K_X-C_0)\cdot C_0=2g-2-e$ and $(K_X-f)\cdot f=-2$.
 Then $$(K_X-D)\cdot D=-(2+a_1)(C_0\cdot D)+(2g-2-e-a_2)(f\cdot D).$$
 If $a_2\ge 2g-2-e$, then $(K_X-D)\cdot D\le0$.
 If $a_2<|2g-2-e|$, then $a_1<|2g-2-e|e^{-1}$ as $a_1e<a_2$.
 As a result, $(K_X-D)\cdot D\le (2g-2-e)^2e^{-1}$.
By the Riemann-Roch Theorem and Proposition \ref{Serre}, for each curve $C$ on $X$, we have 
 \begin{equation*}
 h^1(\mathcal O_X(C))\le\max\bigg\{2g-1-\frac{e}{2}, 1, g+(2g-2-e)^2e^{-1}\bigg\}h^0(\mathcal O_X(C)).
 \end{equation*}	
\end{example} 
\subsection{Mori dream surfaces}
In this subsection, for every Mori dream surface $X$, we cannot give an explicit value $ c_X$ as in Definition \ref{defn-BCP} if $-K_X$ is not nef as follows.
\begin{question}
Let $X$ be a Mori dream surface.
Find an explict value $c_X$ such that $h^1(\mathcal O_X(C))\le c_Xh^0(\mathcal O_X(C))$ for every curve $C$ on $X$.
\end{question}
We first assume that $X$ is a smooth projective surface with $-K_X$ nef and $q(X)=0$.
By the proof of Proposition \ref{-K_Xnef}, for every curve $C$ on $X$, we have 
$$
h^1(\mathcal O_X(C))\le h^0(\mathcal O_X(C)).
$$
Note that $X$ is a Mori dream surface if and only if $-K_X$ is semiample by \cite[Theorem 5.1.2.1]{ADHL15}.
If  $\kappa(X,-K_X)\ge1$, Aretbani and Laface \cite{AL11} proved that $X$ is a Mori dream surface if and only if one of the following holds:
\begin{enumerate}
\item[(i)] $X$ is the minimal resolution of singularities of a Del Pezzo surface with Du Val singularities;
\item[(ii)] $\varphi_{|-mK_X|}$ is an elliptic fibration for some $m>0$ and the Mordell-Weil group of the Jacobian fibration of $\varphi_{|-mK_X|}$ is finite;
\item[(iii)] $X$ is either a K3 surface or an Enriques surface with finite automorphism group $\Aut(X)$.
\end{enumerate}

Testa et al. \cite[Theorem 1]{TVV11} established that a smooth rational surface $X$ is a Mori dream surface provided that $-K_X$ is big. 
They \cite{TVV11} classified surfaces of this type that are blow-ups of $\bP^2$ at distinct points lying on a possibly reducible cubic.
\begin{example}
 Let $X$ be a smooth projective toric surface (cf.  \cite[Definition 3.1.1]{CLS11}).
Note that $\NE(X)$ is polyhedral by \cite[Theorem 6.3.20]{CLS11}.
As a result, $X$ satisfies the BNC.
To show the BCP for $X$, we may take a nef curve $C$ on $X$.
Note that $h^1(\mathcal O_X(C))=0$ by the Demazure vanishing theorem  (cf. \cite[Theorem 9.2.3]{CLS11}).
So $X$ satisfies the BCP.
\end{example}
By \cite[Remark 5.1.3.11]{ADHL15}, there are some examples of Mori dream rational surfaces $X$ with $\kappa(X,-K_X)=-\infty$ that can be obtained by constructing smooth surfaces of complexity 1 with large Picard number as follows.
\begin{example}
Let $X$ be a minimal resolution of the singular rational $\mathrm{K}^*$-surface $Y\subseteq\bP(33,22,6,1)$ of equation $x_0^2+x_1^3+x_2^{11}=0$.
Then $X$ is a rational Mori dream surface with $\kappa(X,-K_X)=-\infty$.
\end{example}
In \cite{KL19}, Keum and Lee studied effective, nef and semiample cones on minimal surfaces $X$ of general type with $p_g(X)=0$, and provided examples of minimal surfaces $X$ of general type with $p_g(X)=0$ and $2\le K_X^2\le 9$ which are Mori dream spaces.
In these examples, they further presented an explicit description of their effective cones with all negative curves.
For instance, there is an example of a Mori dream surface $X$ of general type with $\rho(X)=2$ and $b(X)=0$ as follows.
\begin{example}
Let $X$ be a surface isogenous to a higher product of unmixed type with geometric genus $p_g=0$.
 By \cite[Proposition 3.4]{KL19}, $X$ is a Mori dream surface of general type with $\rho(X)=2$ and $b(X)=0$; and the effective cone and nef cone are generated by fibers of $X\to C/G\cong\bP^1$ and fibers of $X\to D/G\cong\bP^1$.
 \end{example}
Now let $\pi: X\to B$ be an elliptic surface whose generic fiber $E/K$(an elliptic curve $E$ over the function field $K=k(B)$).
By \cite[Proposition 5.4]{SS19}, the sections $C$ of $\pi$ are in natural one-to-one correspondence with $K$-rational points $P$ of $E$.
We say a section $C$ of $f: X\to B$ is non-torsion (or torsion) if $P$ is a non-torsion (or torsion) point.
Note that the set $E(K)$ of all sections of $\pi: S\to B$ is a finitely generated abelian group by \cite[Theorem 6.6]{SS19}.
We call $E(K)$  the Mordell-Weil group of the elliptic surface $\pi: X\to B$.
Now we assume that $\chi:=\chi(O_X)>0$ and $X$ has at least one section $C$.
Then $C^2=-\chi<0$ by \cite[Corollary 5.45]{SS19}.
If $C$ is non-torsion, then $X$ has infinitely many $(-\chi)$-curves of genus $g$ by \cite[Proposition 4.1]{Li26}.
This offers some potential for addressing \cite[Question 4.4]{Bauer et al. 2013}, even though this itself remains highly challenging (cf. \cite[Question 4.2]{Li26}).
In other words,  if $\NE(X)$ is polyhedral, every section of $\pi$ is torsion.
As a result, $E(K)$ is finite.
This result is stated as follows.
\begin{proposition}
\label{prop-MW}
Let  $X$ be an elliptic surface with $\chi=\chi(\mathcal O_X)>0$ and a section $C$ over a curve of genus $g$.
Then the following statements hold.
\begin{enumerate}
	\item If $C$ is non-torsion, then $X$ has infinitely many $(-\chi)$-curves of genus $g$.
	\item If  $\NE(X)$ is polyhedral, then the Mordell-Weil group is finite.
\end{enumerate}
\end{proposition}
As remarked in \cite[Section 7.3]{KL19}, it is an open question whether there exists an example of an elliptic surface $X$ with $\kappa(X)=1$ which is a Mori dream surface.
Now we assume that $\pi: X\to C$ is a relatively minimal elliptic surface with $\kappa(X)=1$ over a smooth curve $C$ of genus $g$.
Let $d=\deg R^1\pi_*\mathcal O_X$.
Note that $\chi(\mathcal O_X)=d$ by \cite[Corollary 16 of Chapter 7]{Friedman98}.
By \cite[Lemma 14 of Chapter 7]{Friedman98}, note that $q(X)=0$ if and only if $g=0$ and $\chi(\mathcal O_X)>0$.
This motivates us to consider whether there exists an example of an elliptic surface $X$ with $\kappa(X)=1$ over $\bP^1$ which is a Mori dream surface as follows.
\begin{question}
\label{Que-MDS-P1}
Does there exist an example of a Mori dream surface $X$ with $\kappa(X)=1$ which admits a relatively minimal elliptic fibration over $\bP^1$?
\end{question}

\end{document}